\newtheorem{thm}{Theorem}[section]
\newtheorem{Proposition}[thm]{Proposition}
\newtheorem{Corollary}[thm]{Corollary}
\newtheorem*{thm*}{Theorem}
\theoremstyle{definition}
\newtheorem{Remark}[thm]{Remark}
\DeclareMathOperator{\rk}{rank}
\DeclareMathOperator{\brk}{\underline{rank}}
\DeclareMathOperator{\expdim}{expdim}
\DeclareMathOperator{\Sym}{Sym}
\renewcommand{\sec}{\mathbb{S}ec}
\begin{document}
\title{Effective identifiability criteria for tensors and polynomials}

\author[Alex Massarenti]{Alex Massarenti}
\address{\sc Alex Massarenti\\
Universidade Federal Fluminense (UFF)\\
Rua M\'ario Santos Braga\\
24020-140, Niter\'oi,  Rio de Janeiro\\ Brazil}
\email{alexmassarenti@id.uff.br}

\author{Massimiliano Mella}
\address{\sc Massimiliano Mella\\ Dipartimento di Matematica e Informatica, Universit\`a di Ferrara, Via Machiavelli 35, 44121 Ferrara, Italy}
\email{mll@unife.it}

\author{Giovanni Staglian\`o}
\address{\sc Giovanni Staglian\`o}
\email{giovannistagliano@gmail.com}

\date{\today}
\subjclass[2010]{Primary 15A69, 15A72, 11P05; Secondary 14N05, 15A69}
\keywords{Waring decomposition, effective identifiability}

\begin{abstract}
A tensor $T$, in a given tensor space, is said to be $h$-identifiable if it admits a unique decomposition as a sum of $h$ rank one tensors. A criterion for $h$-identifiability is called effective if it is satisfied in a dense, open subset of the set of rank $h$ tensors. In this paper we give effective $h$-identifiability criteria for a large class of tensors.  
We then improve these criteria for some symmetric tensors.  For instance, this allows us to give a complete set of effective identifiability criteria for ternary quintic polynomial. Finally, we implement our identifiability algorithms in Macaulay2.
\end{abstract}

\maketitle
\tableofcontents

\section{Introduction}\label{intro}
A tensor rank-1 decomposition of a tensor $T$, lying in a given tensor space over a field $k$, is an expression of the type 
\stepcounter{thm}
\begin{equation}\label{eq1gen}
T = \lambda_1 U_1+...+\lambda_h U_{h}
\end{equation}
where the $U_i$'s are linearly independent rank one tensors, $\lambda_i\in k^*$, and $k$ is either the real or complex field. The \textit{rank} of $T$, denoted by  $\rk(T)$, is the minimal positive integer $h$ such that $T$ admits a decomposition as in (\ref{eq1gen}). 

Tensor decomposition problems and techniques are of relevance in both pure and applied mathematics. For instance, tensor decomposition algorithms have applications in psycho-metrics, chemometrics, signal processing, numerical linear algebra, computer vision, numerical analysis, neuroscience and graph analysis \cite{BK09}, \cite{CM96}, \cite{CGLM08}, \cite{LO15}, \cite{MR13}. In pure mathematics tensor decomposition issues naturally arise in constructing and studying moduli spaces of all possible additive decompositions of a general tensor into a given number of rank one tensors \cite{Do04}, \cite{DK93}, \cite{MM13}, \cite{Ma16}, \cite{RS00}, \cite{TZ11}, 

We say that a tensor rank-1 decomposition has the \textit{generic identifiability property} if the expression (\ref{eq1gen}) is unique, up to permutations and scaling of the factors, on a dense open subset of the set of tensors admitting an expression as in (\ref{eq1gen}). This uniqueness property is useful in several application, we refer to \cite{COV16} for an account.

Given a tensor rank-1 decomposition of length $h$ as in (\ref{eq1gen}) the problem of \textit{specific identifiability} consists in proving that such a decomposition is unique. Following \cite{COV16} we call an algorithm for specific identifiability \textit{effective} if it is sufficient to prove identifiability on a dense open subset of the set of tensors admitting a decomposition as in (\ref{eq1gen}). Therefore, an algorithm is effective if its constraints are satisfied generically, in other words if the same algorithm proves generic identifiability as well.

In this paper we consider symmetric tensors, mixed skew-symmetric tensors, and mixed symmetric tensors. The corresponding rank-1 tensors are parametrized by Veronese varieties, Segre-Grassmann varieties, and Segre-Veronese varieties respectively. We provide $h$-identifiability effective criteria for these spaces, under suitable numerical assumptions on $h$. Our algorithm are based on the existence of suitable flattenings of a given tensor admitting a decomposition as in (\ref{eq1gen}). We would like to stress that we do not need to know an explicit decomposition but just the fact that such a decomposition exists. 

Recall that the border rank $\brk(T)$ of a tensor $T$ is the smallest integer $r>0$ such that $T$ is in the Zariski closure, in the tensor space where $T$ belongs, of the set of tensors of rank $r$. In particular $\brk(T)\leq \rk(T)$. Roughly speaking, our methods require that suitable linear spaces, defined in terms of flattenings, intersect the relevant varieties parametrizing rank one tensors in a zero dimension scheme of a given length. Such a zero dimensional scheme is not required to be reduced and then our criteria can be applied also in border rank identifiability problems, see Remark \ref{bordR}.

Symmetric tensors can also be interpreted  as  homogeneous polynomials. By rephrasing (\ref{eq1gen}) in the symmetric case we say that a polynomial rank-1 decomposition of a homogeneous degree $d$ polynomial $F\in k[x_0,...,x_n]_d$ is an expression of the type 
\stepcounter{thm}
\begin{equation}\label{eq1}
F = \lambda_1 L_1^d+...+\lambda_h L_{h}^d
\end{equation}
where $L_i$ are linearly independent degree 1 polynomials, $\lambda_i\in k^*$, and $k$ is either the
real or complex field. 
Let $h(n,d)$ be the minimum integer such that a general $F\in
k[x_0,...,x_n]_d$ admits a decomposition as in (\ref{eq1}). The number
$h(n,d)$ has been determined in \cite{AH95} and $h(n,d)$-identifiability very seldom holds \cite{Me06}, \cite{Me09}, \cite{GM16}.
Indeed, by \cite[Theorem 1]{GM16} a general polynomial $F\in k[x_{0},...,x_{n}]_{d}$ is $h(n,d)$-identifiable only in the following cases:
\begin{itemize}
\item[-] $n = 1$, $d = 2m+1$, $h(n,d)=m$ \cite{Sy04},
\item[-] $n = d = 3$, $h(3,3) = 5$ \cite{Sy04},
\item[-] $n = 2$, $d = 5$, $h(2,5) = 7$ \cite{Hi88}.
\end{itemize}
In Theorem \ref{hi} we provide effective $h$-identifiability criteria for these polynomials and combined with the previous results this furnishes a complete set of identifiability criteria for these, and few more, polynomials. We would like to stress that the identifiability criteria in Theorem \ref{hi} give new proves of the uniqueness of the decomposition for the general polynomial in the three cases listed above. Finally, in Section \ref{mac2} we implemented our identifiability algorithms in Macaulay2 \cite{Mc2}.

\subsection*{Acknowledgments}
The authors are members of the Gruppo Nazionale per le Strutture Algebriche, Geometriche e le loro Applicazioni of the Istituto Nazionale di Alta Matematica "F. Severi" (GNSAGA-INDAM). MM is partially supported by PRIN "Geometry of Algebraic Varieties" (MIUR 2015EYPTSB\_005). 

\section{Tensors and flattenings}
Let $\underline{n}=(n_1,\dots,n_p)$ and $\underline{d} = (d_1,\dots,d_p)$ be two $p$-uples of positive integers.
Set 
$$d=d_1+\dots+d_p,\ n=n_1+\dots+n_p,\ {\rm and}\
N(\underline{n},\underline{d})=\prod_{i=1}^p\binom{n_i+d_i}{n_i}.$$ 

Let $V_1,\dots, V_p$ be vector spaces of dimensions $n_1+1\leq n_2+1\leq \dots \leq n_p+1$, and consider the product
$$
\mathbb{P}^{\underline{n}} = \mathbb{P}(V_1^{*})\times \dots \times \mathbb{P}(V_p^{*}).
$$
The line bundle 
$$
\mathcal{O}_{\mathbb{P}^{\underline{n}} }(d_1,\dots, d_p)=\mathcal{O}_{\mathbb{P}(V_1^{*})}(d_1)\boxtimes\dots\boxtimes \mathcal{O}_{\mathbb{P}(V_1^{*})}(d_p)
$$
induces an embedding
$$
\begin{array}{cccc}
\sigma\nu_{\underline{d}}^{\underline{n}}:
&\mathbb{P}(V_1^{*})\times \dots \times \mathbb{P}(V_p^{*})& \longrightarrow &
\mathbb{P}(\Sym^{d_1}V_1^{*}\otimes\dots\otimes \Sym^{d_p}V_p^{*})
=\mathbb{P}^{N(\underline{n},\underline{d})-1},\\
      & (\left[v_1\right],\dots,\left[v_p\right]) & \longmapsto & [v_1^{d_1}\otimes\dots\otimes v_p^{d_p}]
\end{array}
$$ 
where $v_i\in V_i$.
We call the image 
$$
SV_{\underline{d}}^{\underline{n}}= \sigma\nu_{\underline{d}}^{\underline{n}}(\mathbb{P}^{\underline{n}} ) \subset \mathbb{P}^{N(\underline{n},\underline{d})-1}
$$ 
a \textit{Segre-Veronese variety}. It is a smooth  variety of dimension $n$ and degree 
$\frac{(n_1+\dots +n_p)!}{n_1!\dots n_p!}d_1^{n_1}\dots d_p^{n_p}$ in $\mathbb{P}^{N(\underline{n},\underline{d})-1}$. 

When $p = 1$, $SV_{d}^{n}$ is a Veronese variety. In this case we write $V_d^n$ for $SV_{d}^{n}$, and $v_{d}^{n}$ for the Veronese embedding.
When $d_1 = \dots = d_p = 1$, $SV_{1,\dots,1}^{\underline{n}}$ is a Segre variety. 
In this case we write $S^{\underline{n}}$ for $SV_{1,\dots,1}^{\underline{n}}$, and  $\sigma^{\underline{n}}$ for the Segre embedding.
Note that 
$$
\sigma\nu_{\underline{d}}^{\underline{n}}=\sigma^{\underline{n}'}\circ \left(\nu_{d_1}^{n_1}\times \dots \times \nu_{d_p}^{n_p}\right),
$$
where $\underline{n}'=(N(n_1,d_1)-1,\dots,N(n_p,d_p)-1)$.

Similarly, given a $p$-uple of $k$-vector spaces $(V_1^{n_1},...,V_{p}^{n_p})$ and  $p$-uple of positive integers $\underline{d} = (d_1,...,d_p)$ we may consider the Segre-Pl\"ucker embedding 
$$
\begin{array}{cccc}
\sigma\pi_{\underline{d}}^{\underline{n}}:
&Gr(d_1,n_1)\times \dots \times Gr(d_p,n_p)& \longrightarrow &
\mathbb{P}(\bigwedge^{d_1}V_1^{n_1}\otimes\dots\otimes \bigwedge^{d_p}V_p^{n_p})
=\mathbb{P}^{N(\underline{n},\underline{d})-1},\\
      & (\left[H_1\right],\dots,\left[H_p\right]) & \longmapsto & [H_1\otimes\dots\otimes H_p]
\end{array}
$$ 
where $N(\underline{n},\underline{d}) = \prod_{i=1}^p\binom{n_i}{d_i}$. We call the image 
$$
SG_{\underline{d}}^{\underline{n}}= \sigma\pi_{\underline{d}}^{\underline{n}}(Gr(d_1,n_1)\times \dots \times Gr(d_p,n_p)) \subset \mathbb{P}^{N(\underline{n},\underline{d})}
$$ 
a \textit{Segre-Grassmann variety}.

The \textit{$h$-secant variety} $\sec_{h}(X)$, of an irreducible, non-degenerate $n$-dimensional variety $X\subset\mathbb{P}^N$, is the Zariski closure of the union of the linear spaces spanned by collections of $h$ points on $X$. The \textit{expected dimension} of $\sec_{h}(X)$ is
$$\expdim(\sec_{h}(X)):= \min\{nh+h-1,N\}.$$
However, the actual dimension of $\sec_{h}(X)$ might be smaller than the expected one. Indeed, this happens when through a general point of $\sec_h(X)$ there are infinitely many $(h-1)$-planes $h$-secant to $X$. We will say that $X$ is \textit{$h$-defective} if $\dim(\sec_{h}(X)) < \expdim(\sec_{h}(X))$.


The following remark was the starting point of the investigation in \cite{MM13}.
\begin{Remark}\label{pd} If a polynomial $F\in k[x_0,...,x_n]_d$
  admits a decomposition as in (\ref{eq1}) then $F\in
  \sec_h(V_{d}^{n})$, and conversely a general $F\in
  \sec_h(V_{d}^{n})$ can be written as in (\ref{eq1}). If $F = \lambda_1L^{d}_{1}+...+\lambda_hL^{d}_{h}$ is a decomposition then the partial derivatives of order $s$ of $F$ can be decomposed as a linear combination of $L^{d-s}_{1},...,L^{d-s}_{h}$ as well.

These partial derivatives are $\binom{n+s}{n}$ homogeneous polynomials of degree $d-s$ spanning a linear space $H_{\partial,s}\subseteq \mathbb{P}(k[x_0,...,x_n]_{d-s})$. Therefore, the linear space $\left\langle L_1^{d-s},\dots,L_h^{d-s}\right\rangle$ contains $H_{\partial,s}$.
\end{Remark}

Our first aim is to generalize Remark~\ref{pd} to tensors. The natural
tools to replace partial derivatives are flattenings.

\subsection{Flattenings}\label{flat}
Let $V_1,...,V_{p}$ be $k$-vector spaces of finite dimension, and consider the tensor product $V_1\otimes ...\otimes V_{p} = (V_{a_1}\otimes ...\otimes V_{a_s})\otimes (V_{b_1}\otimes ...\otimes V_{b_{p-s}})= V_{A}\otimes V_{B}$ with $A\cup B = \{1,...,p\}$, $B = A^c$. Then we may interpret a tensor 
$$T \in V_1\otimes ...\otimes V_p = V_{A}\otimes V_{B}$$
as a linear map $\widetilde{T}:V_{A}^{*}\rightarrow V_{A^c}$. Clearly, if the rank of $T$ is at most $r$ then the rank of $\widetilde{T}$ is at most $r$ as well. Indeed, a decomposition of $T$ as a linear combination of $r$ rank one tensors yields a linear subspace of $V_{A^c}$, generated by the corresponding rank one tensors, containing $\widetilde{T}(V_A^{*})\subseteq V_{A^c}$. The matrix associated to the linear map $\widetilde{T}$ is called an \textit{$(A,B)$-flattening} of $T$.    

In the case of mixed tensors we can consider the embedding
$$\Sym^{d_1}V_1\otimes ...\otimes \Sym^{d_p}V_p\hookrightarrow V_A\otimes V_B$$
where $V_A = \Sym^{a_1}V_1\otimes ...\otimes \Sym^{a_p}V_p$,
$V_B=\Sym^{b_1}V_1\otimes ...\otimes\Sym^{b_p}V_p$, with $d_i =
a_i+b_i$ for any $i = 1,...,p$. In particular, if $n = 1$ we may
interpret a tensor $F\in \Sym^{d_1}V_1$ as a degree $d_1$ homogeneous
polynomial on $\mathbb{P}(V_1^*)$. In this case the matrix associated
to the linear map $\widetilde{F}:V_A^*\rightarrow V_B$ is nothing but
the $a_1$-th \textit{catalecticant matrix} of $F$, that is the matrix
whose lines are the coefficient of the partial derivatives of order
$a_1$ of $F$. This identifies the linear space $H_{\partial,s}$ in
Remark \ref{pd} with $\mathbb{P}(\widetilde{F}(V_A^*))\subseteq
\mathbb{P}(V_B)$, where $a_1 = s$, $b_1 = d-a_1 = d-s$. 

Similarly, by considering the inclusion 
$$\bigwedge^{d_1}V_1\otimes ...\otimes \bigwedge^{d_p}V_p\hookrightarrow V_A\otimes V_B$$
where $V_A = \bigwedge^{a_1}V_1\otimes ...\otimes \bigwedge^{a_p}V_p$, $V_B=\bigwedge^{b_1}V_1\otimes ...\otimes\bigwedge^{b_p}V_p$, with $d_i = a_i+b_i$ for any $i = 1,...,p$, we get the so called \textit{skew-flattenings}. We refer to \cite{La12} for details on the subject.

\section{Effective identifiability}

In this section we give $h$-identifiability criteria for
tensors, and we derive effective $h$-identifiability criteria, under some constraints on $h$.
 
\begin{Proposition}\label{prop1gen}
Let $T\in \Sym^{d_1}V_1\otimes ...\otimes \Sym^{d_n}V_n$ be a tensor
admitting a decomposition $T = \sum_{i=1}^h\lambda_i U_i$ as in~(\ref{eq1gen}). Fix an $(A,B)$-flattening $\widetilde{T}:V_A^*\rightarrow V_B$ of $T$ such that $\dim(V_A^*)\geq h$, and assume that
\begin{itemize}
\item[i)] the linear space $\mathbb{P}(\widetilde{T}(V_A^*))$ has dimension $h-1$,
\item[ii)] $\dim(\mathbb{P}(\widetilde{T}(V_A^*))\cap SV_{\underline{b}}^{\underline{n}}) = 0$,
\item[iii)]  $\deg(\mathbb{P}(\widetilde{T}(V_A^*))\cap SV_{\underline{b}}^{\underline{n}}) = h$.
\end{itemize}
where $\underline{b} = (b_1,...,b_n)$. Then $T$ is $h$-identifiable and it has rank $h$. 


In particular, if $F\in k[x_0,...,x_n]_d$ is a polynomial admitting a decomposition $F = \sum_{i=1}^h\lambda_iL_i^d$, $s$ is an integer such that ${n+s\choose n}\geq h>{n+s-1\choose n} $, and
\begin{itemize}
\item[i)] the linear space $H_{\partial,s}$ generated by the partial derivatives of order $s$ of $F$ has dimension $h-1$,
\item[ii)] $\dim(H_{\partial,s}\cap V_{d-s}^n) = 0$,
\item[iii)]  $\deg(H_{\partial,s}\cap V_{d-s}^n) = h$.
\end{itemize}
Then $F$ is $h$-identifiable and it has  rank $h$.  
\end{Proposition}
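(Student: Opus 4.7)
The plan is to interpret the flattening $\widetilde{T}: V_A^*\to V_B$ geometrically via the Segre-Veronese variety $SV_{\underline{b}}^{\underline{n}}$. Each rank-one summand $U_i = v_{i,1}^{d_1}\otimes\cdots\otimes v_{i,n}^{d_n}$ flattens to a rank-one linear map whose image is the line spanned by $v_{i,B}:=v_{i,1}^{b_1}\otimes\cdots\otimes v_{i,n}^{b_n}$, so the point $P_i:=[v_{i,B}]$ lies on $SV_{\underline{b}}^{\underline{n}}$. By linearity of the flattening, $\widetilde{T}(V_A^*)\subseteq\langle v_{1,B},\ldots,v_{h,B}\rangle$, and hypothesis (i) forces equality since the right-hand side has vector-space dimension at most $h$ while the left-hand side has vector-space dimension exactly $h$. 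This simultaneously proves the rank assertion: any decomposition of $T$ of length $h'<h$ would give $\dim\widetilde{T}(V_A^*)\leq h'<h$, contradicting (i); hence $\rk(T)=h$, and the $P_i$ are pairwise distinct, for otherwise two summands would be proportional and produce a shorter decomposition.

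For identifiability, let $T=\sum_i\mu_i U_i'$ be any other length-$h$ decomposition with associated points $P_i'\in SV_{\underline{b}}^{\underline{n}}$. The same reasoning gives $\widetilde{T}(V_A^*)=\langle P_1',\ldots,P_h'\rangle$, so both $\{P_i\}$ and $\{P_i'\}$ are supported on the scheme $Z:=\mathbb{P}(\widetilde{T}(V_A^*))\cap SV_{\underline{b}}^{\underline{n}}$. Hypotheses (ii) and (iii) say $Z$ is zero-dimensional of length exactly $h$; since the first decomposition already contributes $h$ distinct points, $Z$ is reduced with support $\{P_1,\ldots,P_h\}$, and after reindexing $P_i=P_i'$ for every $i$. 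Injectivity of the Segre-Veronese embedding then determines $v_{i,j}$ up to scalar for every $j$ with $b_j>0$.

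To recover the full tensor $U_i$ and the scalar $\lambda_i$, I also use the $A$-side of the flattening: writing $\widetilde{T}=\sum_i\lambda_i v_{i,A}\otimes v_{i,B}=\sum_i\mu_i v'_{i,A}\otimes v_{i,B}$ with matched $B$-parts, linear independence of the $v_{i,B}$ (which holds since the $P_i$ are distinct and span an $h$-dimensional subspace) forces $\lambda_i v_{i,A}=\mu_i v'_{i,A}$. Thus $v_{i,A}$ is determined up to scalar, and since $d_j=a_j+b_j>0$ at least one of the two sides pins down each $v_{i,j}$; the $\lambda_i$ are then fixed by comparing coefficients. The polynomial case is the specialization $p=1$, $(a,b)=(s,d-s)$: by Remark~\ref{pd} one has $\widetilde{F}(V_A^*)=H_{\partial,s}$ and $SV_{d-s}^n=V_{d-s}^n$, while the inequality $\binom{n+s}{n}\geq h>\binom{n+s-1}{n}$ selects $s$ so that $\dim V_A^*\geq h$ and hypothesis (i) can hold. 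The main subtlety is controlling the scheme structure of $Z$ so that no $P_i$ collide or carry extra multiplicity; this is handled by the rank-$h$ step, which forces the points to be distinct.
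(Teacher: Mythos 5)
Your proof is correct and follows essentially the same route as the paper: hypothesis (i) forces $\mathbb{P}(\widetilde{T}(V_A^*))$ to coincide with the span of the flattened rank-one summands of any length-$h$ decomposition, and then (ii)--(iii) cap the number of points of $SV_{\underline{b}}^{\underline{n}}$ in that span at $h$, forcing the two point sets to agree. You actually go further than the paper's own (very terse) proof by explicitly handling the rank claim and by recovering the $A$-parts and the scalars $\lambda_i$ from the matched $B$-parts -- steps the paper leaves implicit -- so no gap to report.
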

\begin{proof}
Assume that $T = \sum_{i=1}^h\lambda_iU_i = \sum_{i=1}^h\mu_i V_i$
admits two different decompositions. Since
$\dim(\mathbb{P}(\widetilde{T}(V_A^*))) = h-1$ by Section \ref{flat}
we have $\mathbb{P}(\widetilde{T}(V_A^*)) = \left\langle
  \widetilde{U}_1,...,\widetilde{U}_h\right\rangle = \left\langle
  \widetilde{V}_1,,...,\widetilde{V}_h\right\rangle$, where
$\widetilde{U}_i, \widetilde{V}_i$ are the rank one tensors in
$\mathbb{P}(V_B)$ induced by $U_i$ and $V_i$ respectively. Hence there
are at least $h+1$ points in the intersection
$\mathbb{P}(\widetilde{T}(V_A^*))\cap
SV_{\underline{b}}^{\underline{n}}$, contradicting iii).
\end{proof} 

Next, we check when the conditions in Proposition \ref{prop1gen} define effective criteria.

\begin{Proposition}\label{pro2gen}
The criterion in Proposition \ref{prop1gen} is effective when
$N(\underline{n},\underline{b})>h+\dim(SV_{\underline{b}}^{\underline{n}})$
in the mixed symmetric case. In particular, in the symmetric case the criterion is effective when ${n+d-s\choose n}>h+n$.
\end{Proposition}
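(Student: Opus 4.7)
Set $X := SV_{\underline{b}}^{\underline{n}} \subseteq \mathbb{P}^{N-1}$ with $N = N(\underline{n},\underline{b})$ and $D = \dim X$. The plan is to verify that for a generic rank-$h$ tensor $T = \sum_{i=1}^{h}\lambda_{i}U_{i}$, conditions (i), (ii), (iii) of Proposition~\ref{prop1gen} are all satisfied.

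Condition (i) is a generic-position statement. The flattening $\widetilde{T} : V_A^* \to V_B$ sends a covector to a linear combination of the images $\widetilde{U}_i \in V_B$ of the rank-one summands, so $\widetilde{T}(V_A^*) = \Span{\widetilde{U}_1,\dots,\widetilde{U}_h}$ whenever the $A$-factors of the $U_i$ are linearly independent in $V_A$; this holds generically since $\dim V_A^* \geq h$ is assumed in Proposition~\ref{prop1gen}. Non-degeneracy of $X$ together with $N \geq h$ (which follows from $N > h + D$) guarantees that $h$ generic points of $X$ are projectively independent, giving $\dim\mathbb{P}(\widetilde{T}(V_A^*)) = h-1$.

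For (ii) and (iii), set $\Lambda = \Span{\widetilde{U}_1,\dots,\widetilde{U}_h}$ and consider the incidence variety
\[
\Phi = \{(U_1,\dots,U_{h+1}) \in X^{h+1} : U_{h+1} \in \Span{U_1,\dots,U_h}\}.
\]
The underlying determinantal locus of linearly dependent $(h+1)$-tuples has codimension $N - h$ in $(\mathbb{P}^{N-1})^{h+1}$. The trivial components $\Phi_0 \subseteq \Phi$, i.e.\ the $h$ diagonals $U_{h+1} = U_i$, have dimension $hD$ and dominate $X^h$ via the first $h$ projections. Every other irreducible component of $\Phi$ is cut out with the expected codimension $N - h$, hence has dimension at most $(h+1)D - (N-h) = hD + D + h - N$, which is strictly less than $hD$ precisely when $N > h + D$. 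Consequently no non-trivial component dominates $X^h$, so for generic $(U_1,\dots,U_h)$ the set-theoretic intersection $\Lambda \cap X$ is exactly $\{\widetilde{U}_1,\dots,\widetilde{U}_h\}$, giving (ii). For (iii), the Zariski tangent space $T_{\widetilde{U}_i}(\Lambda \cap X)$ equals $T_{\widetilde{U}_i}\Lambda \cap T_{\widetilde{U}_i}X$, whose expected projective dimension $(h-1) + D - (N-1) = h + D - N$ is negative under our hypothesis; generically this intersection is trivial, making each $\widetilde{U}_i$ a reduced point of $\Lambda \cap X$ and yielding $\deg(\Lambda \cap X) = h$.

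The main technical obstacle is justifying that non-trivial components of $\Phi$ really have the expected codimension $N - h$ in $X^{h+1}$ and that the tangent-space intersections $T_{\widetilde{U}_i}\Lambda \cap T_{\widetilde{U}_i}X$ are in sufficiently general position for generic tuples. Both points rely on the Cohen--Macaulay property of the determinantal locus of linearly dependent tuples together with irreducibility and non-degeneracy of the Segre--Veronese $X$; a dimension count on the projection $\Phi\setminus\Phi_0 \to X$ onto the last factor, bounded via $\dim\sec_h(X)$, yields the required estimate. In the symmetric case $p = 1$, the variety $X = V_{d-s}^{n}$ has dimension $n$, so the hypothesis $N(\underline{n},\underline{b}) > h + D$ specializes to $\binom{n+d-s}{n} > h + n$.
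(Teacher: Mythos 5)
Your outline is the same as the paper's: check that conditions (i)--(iii) of Proposition \ref{prop1gen} hold at a general point of $\sec_h(SV_{\underline{d}}^{\underline{n}})$. Your treatment of (i) is acceptable and essentially replaces the paper's citation of \cite[Proposition 4.1]{SU00} by the direct observation that the $A$-parts and $B$-parts of $h$ general rank-one summands are linearly independent. The gap is in (ii) and (iii). For the incidence variety $\Phi\subset X^{h+1}$ you assert that every non-trivial component ``is cut out with the expected codimension $N-h$, hence has dimension at most $(h+1)D-(N-h)$.'' That is precisely the statement requiring proof, and the tools you invoke point in the wrong direction: the determinantal structure (vanishing of the $(h+1)\times(h+1)$ minors of the coordinate matrix) bounds the codimension of every component of $\Phi$ from \emph{above} by $N-h$, i.e.\ it bounds the dimension from \emph{below}; Cohen--Macaulayness of determinantal loci only applies once the expected codimension is already known to be attained and yields no upper bound on the dimension of components. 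Without an upper bound, the conclusion that no non-trivial component dominates $X^h$ does not follow, and the underlying general-position statement is genuinely false if one drops irreducibility or non-degeneracy of $X$ (for instance when $X$ contains a positive-dimensional linear space meeting a general secant plane). You flag this yourself as ``the main technical obstacle'' but do not resolve it, and the sketched projection $\Phi\setminus\Phi_0\to X$ ``bounded via $\dim\sec_h(X)$'' is not carried out. The same issue affects your reducedness argument for (iii): ``generically this intersection is trivial'' is an unproved tangency assertion about general secant planes.

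The missing ingredient is exactly the general position (Trisecant) Lemma of Chiantini--Ciliberto, \cite[Proposition 2.6]{CC02}, which the paper cites at this point: for an irreducible, non-degenerate $X\subset\mathbb{P}^{N-1}$ of dimension $D$ with $N-1\geq h+D$, the general $(h-1)$-plane spanned by $h$ points of $X$ meets $X$ in exactly those $h$ points. Its proof is not a naive dimension count on $\Phi$; it proceeds by induction on $h$ via projections from general points of $X$ and uses irreducibility and non-degeneracy in an essential way. Either cite that lemma or reproduce its argument; as written, your proof does not close.
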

\begin{proof}
Let $[T]\in \sec_h(SV_{\underline{d}}^{\underline{n}})$ be a general
point. Assume that $\dim (\mathbb{P}(\widetilde{T}(V_A^*)))\leq
h-2$. This condition forces the $(A,B)$-flattening matrix to have rank
at most $h-1$. On the other hand, by \cite[Proposition 4.1]{SU00} these minors do not vanish on $\sec_h(SV_{\underline{d}}^{\underline{n}})$ and therefore define a closed subset of $\sec_h(SV_{\underline{d}}^{\underline{n}})$. To conclude observe that by the Trisecant Lemma \cite[Proposition 2.6]{CC02}, the general $h$-secant $(h-1)$-linear space intersects $SV_{\underline{b}}^{\underline{n}}$ in $h$ points as long as $N(\underline{n},\underline{b})>h+n$.
\end{proof}

We may slightly improve Proposition \ref{pro2gen}, under suitable numerical assumption.

\begin{Proposition}\label{pro1.1gen}
Let $T\in \Sym^{d_1}V_1\otimes ...\otimes \Sym^{d_p}V_p$ be a tensor admitting a decomposition $T = \sum_{i=1}^h\lambda_i U_i$. Fix an $(A,B)$-flattening $\widetilde{T}:V_A^*\rightarrow V_B$ of $T$ such that $\dim(V_A^*)\geq h$, and assume that
\begin{itemize}
\item[i)] the linear space $\mathbb{P}(\widetilde{T}(V_A^*))$ has dimension $h-1$,
\item[ii)] $\dim(\mathbb{P}(\widetilde{T}(V_A^*))\cap SV_{\underline{b}}^{\underline{n}}) = 0$,
\item[iii)]  $h+n=N(\underline{n},\underline{b})$,
\item[iv)] $\deg(SV_{\underline{b}}^{\underline{n}})\leq h+1$, 
\item[v)]  $\deg(\langle [U_1],\ldots,[U_h]\rangle\cap SV_{\underline{d}}^{\underline{n}})=h$.
\end{itemize}
Then $T$ is $h$-identifiable and the criterion is effective. 


In particular, in the symmetric case we have the following. Let $F\in k[x_0,...,x_n]_d$ be a polynomial admitting a decomposition $F = \sum_{i=1}^h\lambda_iL_i^d$.  Fix an integer $s$ such that ${n+s\choose n}\geq h>{n+s-1\choose n} $. Assume that:
\begin{itemize}
\item[i)] the linear space $H_{\partial,s}$ generated by the partial derivatives of order $s$ of $F$ has dimension $h-1$,
\item[ii)] $\dim(H_{\partial,s}\cap V_{d-s}^n) = 0$,
\item[iii)]  $h+n={n+d-s\choose n}$,
\item[iv)] $(d-s)^n\leq h+1$,
\item[v)]  $\deg(\langle[L_1^d],\ldots,[L_h^d]\rangle\cap V_d^n)=h$.
\end{itemize}
Then $F$ is $h$-identifiable and the criterion is effective.
\end{Proposition}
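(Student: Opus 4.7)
The plan is to adapt the proof of Proposition \ref{prop1gen}, replacing the direct assumption ``$\deg(I)=h$'' with a Bezout-style bound. First I would analyze the intersection scheme $I=\mathbb{P}(\widetilde{T}(V_A^*))\cap SV_{\underline{b}}^{\underline{n}}$: by i)--iii), $\mathbb{P}(\widetilde{T}(V_A^*))$ is an $(h-1)$-linear space in $\mathbb{P}^{N(\underline{n},\underline{b})-1}$ of exactly complementary codimension to $SV_{\underline{b}}^{\underline{n}}$ (which has dimension $n$), and the intersection is zero-dimensional. Bezout combined with iv) then gives
$$
\deg(I)\leq \deg(SV_{\underline{b}}^{\underline{n}})\leq h+1,
$$
while the $h$ distinct points $\widetilde{U}_1,\dots,\widetilde{U}_h$ already lie in $I$, forcing $\deg(I)\in\{h,h+1\}$.

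Next I would split into two cases. If $\deg(I)=h$, Proposition \ref{prop1gen} applies directly. If $\deg(I)=h+1$, I would consider a hypothetical second decomposition $T=\sum_{j=1}^h\mu_j V_j$; each flattening $\widetilde{V}_j$ lies in $I$, and the $\widetilde{V}_j$ are distinct, so either $\{\widetilde{V}_j\}\subseteq\{\widetilde{U}_i\}$ --- in which case the unique lifting from $SV_{\underline{b}}^{\underline{n}}$ to $SV_{\underline{d}}^{\underline{n}}$ forces the two decompositions to coincide up to permutation and scaling --- or exactly one $\widetilde{V}_j$, say $\widetilde{V}_h$, is a genuinely new point $[\widetilde{W}]\in I$. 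Lifting $\widetilde{W}$ to a rank one tensor $W\in SV_{\underline{d}}^{\underline{n}}$, the identity $\sum\lambda_i U_i=\sum\mu_j V_j$ rearranges to $W\in\langle U_1,\dots,U_h\rangle$; hypothesis v) then forces $W$ to coincide with some $U_i$, whence $\widetilde{W}=\widetilde{U}_i$, a contradiction. This establishes $h$-identifiability.

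For effectivity, my plan is to parallel the argument of Proposition \ref{pro2gen}. Condition i) defines an open subset of $\sec_h(SV_{\underline{d}}^{\underline{n}})$ by \cite[Proposition 4.1]{SU00}; condition ii) is generic by the expected dimension count since iii) makes the codimensions complementary; conditions iii) and iv) are numerical hypotheses independent of $T$; and condition v) holds on a dense open subset of $\sec_h(SV_{\underline{d}}^{\underline{n}})$ by the Trisecant Lemma \cite[Proposition 2.6]{CC02} applied directly to $SV_{\underline{d}}^{\underline{n}}$, since $N(\underline{n},\underline{d})>N(\underline{n},\underline{b})=h+n$ as soon as the chosen flattening is non-trivial.

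The main obstacle I anticipate is the case $\deg(I)=h+1$: one must track carefully how a would-be ``extra'' intersection point in the flattening variety lifts to a rank one tensor in $SV_{\underline{d}}^{\underline{n}}$, and then exploit the linear relation between the two decompositions of $T$ to trap that tensor inside $\langle U_1,\dots,U_h\rangle$ so that v) can be invoked. A minor subtlety worth handling is a non-reduced $I$ of degree $h+1$ whose support consists of only the $h$ points $\widetilde{U}_i$; in that situation every $\widetilde{V}_j$ lies in that support and identifiability is immediate.
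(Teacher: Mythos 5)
Your proposal is correct and follows essentially the same route as the paper: conditions ii)--iv) bound the intersection with $SV_{\underline{b}}^{\underline{n}}$ by $h+1$ points (you make the Bezout step explicit, which the paper leaves implicit), so the two decompositions share $h-1$ summands, and the remaining "new" point is trapped in $\langle U_1,\dots,U_h\rangle$ where hypothesis v) yields the contradiction; effectivity is again the Trisecant Lemma. Your explicit handling of the case split and of a possibly non-reduced intersection is a welcome refinement, but not a different argument.
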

\begin{proof}
Assume that $T = \sum_{i=1}^h\lambda_i U_i = \sum_{i=1}^h\mu_i V_i$
admits two different decompositions. Since
$\dim(\mathbb{P}(\widetilde{T}(V_A^*))) = h-1$ by Section \ref{flat}
we have $\mathbb{P}(\widetilde{T}(V_A^*)) = \left\langle
  \widetilde{U}_1,...,\widetilde{U}_h\right\rangle = \left\langle
  \widetilde{V}_1,...,\widetilde{V}_h\right\rangle$, where
$\widetilde{U}_i, \widetilde{V}_i$ are the rank one tensors in
$\mathbb{P}(V_B)$ induced by $U_i$ and $V_i$ respectively. Assumptions
ii), iii), and iv) show that $\mathbb{P}(\widetilde{T}(V_A^*))$ intersects $SV_{\underline{b}}^{\underline{n}}$ in at most $h+1$ points. Therefore, without loss of generality we may assume that $U_i=V_i$, for $i=1,\ldots,h-1$. By construction we have
$$\langle V_1,\ldots,V_h\rangle=\langle V_1,\ldots,V_{h-1}, F\rangle=\langle U_1,\ldots,U_{h-1}, F\rangle=\langle U_1,\ldots,U_{h}\rangle$$ 
hence $\deg(\langle U_1,\ldots,U_h\rangle\cap SV_{\underline{d}}^{\underline{n}})\geq h+1$ contradicting assumption v). The criterion is effective again by the Trisecant Lemma \cite[Proposition 2.6]{CC02}.
\end{proof}

\begin{Remark}
Propositions, \ref{prop1gen}, \ref{pro2gen}, \ref{pro1.1gen} can be easily extended to the skew symmetric case, using the skew-flattenings in Section \ref{flat}, and the Segre-Grassmann variety instead of the Segre-Veronese variety. We leave the details to the reader.
\end{Remark}

Next, we work out our criterion in some interesting cases, for the
readers convenience we report also the skew symmetric case.

\begin{Corollary}\label{c1}
Let us consider the tensor space $\Sym^{d_1}V_1^{n}\otimes...\otimes \Sym^{d_p}V_p^{n}$, and set $m_i = \lfloor \frac{d_i}{2}\rfloor$. If 
$$h< \prod_{i=1}^p\binom{n-1+m_i}{n-1}-p(n-1)$$ 
then the criterion in Proposition \ref{prop1gen} is effective, while for tensors in $\bigwedge^{d_1}V_1^{n}\otimes...\otimes \bigwedge^{d_p}V_p^{n}$ criterion in Proposition \ref{prop1gen} is effective when
$$h< \prod_{i=1}^p\binom{n}{m_i}-\prod_{i=1}^p m_i(n-m_i).$$
Now, consider $V_1^n\otimes ....\otimes V_p^n$ and set $m = \lfloor \frac{p}{2}\rfloor$. If
$$h< n^m-m(n-1)$$
then the criterion in Proposition \ref{prop1gen} is effective.

Finally, let $V_1^{n_1}\otimes ....\otimes V_p^{n_p}$ be an unbalanced product, that is $n_1> 1 +\prod_{i=2}^p n_i-\sum_{i=2}^p(n_i-1)$. If
$$h<\prod_{i=2}^p n_i -\sum_{i=2}^p(n_i-1)$$
then the criterion in Proposition \ref{prop1gen} is effective.
\end{Corollary}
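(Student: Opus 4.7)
The plan is to obtain each of the four bounds as a direct instance of Proposition \ref{pro2gen} and of its skew-symmetric analogue indicated in the Remark following Proposition \ref{pro1.1gen}. Once an $(A,B)$-flattening is chosen, effectiveness requires only that $\dim V_A^{*}\geq h$ and that
$$N(\underline{n},\underline{b}) > h + \dim(SV_{\underline{b}}^{\underline{n}}),$$
with the Segre-Grassmann replacing the Segre-Veronese in the skew case, so the work reduces to picking a good flattening in each situation and reading off the numerics.

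In the mixed symmetric case I would use the balanced split $b_i=m_i=\lfloor d_i/2\rfloor$, $a_i=d_i-m_i\geq m_i$. Then $\dim V_B=\prod_i\binom{n-1+m_i}{n-1}$, the Segre-Veronese $SV_{\underline{m}}^{\underline{n-1}}\subset\mathbb{P}(V_B)$ has dimension $p(n-1)$, and $a_i\geq m_i$ gives $\dim V_A^{*}\geq\dim V_B>h$; Proposition \ref{pro2gen} then yields exactly the stated bound. The skew case is handled identically: take the skew-flattening with $b_i=m_i$, substitute $\dim\bigwedge^{m_i}V_i^n=\binom{n}{m_i}$ and the dimensions of the factors $Gr(m_i,n)$ into the skew analogue of Proposition \ref{pro2gen}, and read off the inequality.

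For the Segre product $V_1^n\otimes\cdots\otimes V_p^n$ I would split the factors as evenly as possible, with $|B|=m=\lfloor p/2\rfloor$ and $|A|=\lceil p/2\rceil$, so $\dim V_B=n^m$, the Segre in $\mathbb{P}(V_B)$ has dimension $m(n-1)$, and $\dim V_A^{*}=n^{\lceil p/2\rceil}\geq n^m\geq h$ is automatic. For the unbalanced case I would take the trivial flattening $A=\{1\}$, $B=\{2,\dots,p\}$: the Segre in $\mathbb{P}(V_B)$ has dimension $\sum_{i=2}^p(n_i-1)$ in $\mathbb{P}^{\prod_{i=2}^p n_i - 1}$, so the effectiveness inequality is exactly the stated bound, while the unbalanced hypothesis on $n_1$ is precisely what guarantees $\dim V_A^{*}=n_1\geq h$. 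The one point needing attention in every case is this last dimension inequality, and the balanced choices of split are designed to make it automatic in the first three cases.
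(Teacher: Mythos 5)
Your proof follows essentially the same route as the paper's: the paper's entire argument consists of naming exactly the balanced flattenings you chose in each of the four cases (with $a_i=\lceil d_i/2\rceil$, $b_i=\lfloor d_i/2\rfloor$ in the mixed cases, the $\lceil p/2\rceil$/$\lfloor p/2\rfloor$ split of factors for the Segre, and $A=\{1\}$ in the unbalanced case) and invoking Proposition \ref{pro2gen} and its skew-symmetric analogue, with your added verification that $\dim V_A^{*}\geq h$ being a correct and worthwhile detail the paper leaves implicit. One remark: your computation in the skew case gives $\sum_{i=1}^{p}m_i(n-m_i)$ for the dimension of the product of Grassmannians, whereas the printed statement has $\prod_{i=1}^{p}m_i(n-m_i)$; yours is the correct dimension, so the discrepancy lies with the statement as printed rather than with your argument.
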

\begin{proof}
In the mixed symmetric case consider the flattening 
$$\left(\bigotimes_{i=1}^p\Sym^{\lceil \frac{d_i}{2}\rceil}V_i^n\right)^{*}\rightarrow \bigotimes_{i=1}^p\Sym^{\lfloor \frac{d_i}{2}\rfloor}V_i^n$$
while in the mixed skew-symmetric case it is enough to consider the analogous skew-flattening.

Similarly, in the second case we choose the flattening 
$$\left(\bigotimes_{i=1}^{\lceil\frac{p}{2} \rceil}V_i^n\right)^{*}\rightarrow \bigotimes_{i=\lceil\frac{p}{2} \rceil+1}^{p}V_i^n.$$
Finally, we consider the flattening 
$$(V_1^{n_1})^{*}\rightarrow \bigotimes_{i=2}^{p}V_i^{n_i}$$
in the unbalanced case.
\end{proof}

\begin{Remark}\label{grass}
For Veronese varieties our results are equivalent to the identifiability criterion given by A. Iarrobino and V. Kanev in \cite{IK99}. In the $d$-factor Segre case they are weaker than reshaped Kruskal \cite[Proposition 16]{COV16} for $d$ odd but they perform better for $d$ even. While for unbalanced Segre our criteria perform better than \cite[Proposition 17]{COV16}.
\end{Remark} 

\begin{Remark}\label{bordR}
The algorithm in Proposition \ref{prop1gen} works for the border rank as well. Indeed, let $T$ be a tensor, and $P_t = U_{1,t}+\dots + U_{r,t}$, $Q_t = V_{1,t}+\dots + V_{r,t}$ be two sequence of rank $r$ tensors such that $\lim_{t\mapsto 0}P_t = \lim_{t\mapsto 0}Q_t = T$, and $\lim_{t\mapsto 0}\{U_{1,t},\dots,U_{r,t}\} \neq \lim_{t\mapsto 0}\{ V_{1,t},\dots,V_{r,t}\}$. Fix an $(A,B)$-flattening $\widetilde{T}:V_A^*\rightarrow V_B$ of $T$ such that $\dim(V_A^*)\geq r$, and let us denote by $\widetilde{U}_{i,t}, \widetilde{V}_{j,t}$, $\widetilde{P}_t$, $\widetilde{Q}_t$ the corresponding flattenings of $U_{i,t}, V_{j,t}, P_t, Q_t$. Then $\mathbb{P}(\widetilde{P}_t(V_A^*))\subseteq \left\langle \widetilde{U}_{1,t},\dots, \widetilde{U}_{r,t}\right\rangle$ and $\mathbb{P}(\widetilde{Q}_t(V_A^*))\subseteq \left\langle \widetilde{V}_{1,t},\dots, \widetilde{V}_{r,t}\right\rangle$ yield $\lim_{t\mapsto 0}\mathbb{P}(\widetilde{P}_t(V_A^*))\subset \Gamma_{U}$, $\lim_{t\mapsto 0}\mathbb{P}(\widetilde{Q}_t(V_A^*))\subset \Gamma_{V}$, where $\Gamma_U = \lim_{t\mapsto 0}\left\langle \widetilde{U}_{1,t},\dots, \widetilde{U}_{r,t}\right\rangle$ and $\Gamma_V = \lim_{t\mapsto 0}\left\langle \widetilde{V}_{1,t},\dots, \widetilde{V}_{r,t}\right\rangle$.

Now, let $X\subset\mathbb{P}(V_B)$ be the variety parametrizing rank one tensors. Since by hypothesis $\dim(\mathbb{P}(\widetilde{T}(V_A^*))) = r-1$ we have that $\mathbb{P}(\widetilde{T}(V_A^*))  = \lim_{t\mapsto 0}\mathbb{P}(\widetilde{P}_t(V_A^*)) = \lim_{t\mapsto 0}\mathbb{P}(\widetilde{Q}_t(V_A^*))$ forces $\mathbb{P}(\widetilde{T}(V_A^*)) = \Gamma_U = \Gamma_V$. Finally, since 
$$\lim_{t\mapsto 0}\{\widetilde{U}_{1,t},\dots,\widetilde{U}_{r,t}\}\subseteq X\cap \Gamma_U = X\cap \mathbb{P}(\widetilde{T}),\: \lim_{t\mapsto 0}\{\widetilde{V}_{1,t},\dots,\widetilde{V}_{r,t}\}\subseteq X\cap \Gamma_V = X\cap \mathbb{P}(\widetilde{T}(V_A^*))$$ and $\lim_{t\mapsto 0}\{\widetilde{U}_{1,t},\dots,\widetilde{U}_{r,t}\}\neq \lim_{t\mapsto 0}\{\widetilde{V}_{1,t},\dots,\widetilde{V}_{r,t}\}$ we get that $\deg(\mathbb{P}(\widetilde{T}(V_A^*))\cap X) \geq r+1$, a contradiction with hypothesis iii) of Proposition \ref{prop1gen}.
\end{Remark} 

Finally, we give an effective $7$-identifiability criterion for planes
quintics, and we extend it to the cases listed in Section \ref{intro}
when the uniqueness of decomposition holds for a general polynomial.

\begin{thm}\label{hi}
Let $F \in {\mathbb C}[x_{0},...,x_n]_{d}$ be a polynomial, and $H_{\partial,s}$ the linear span of its partial derivatives of order $s$ in ${\mathbb P}(k[x_0,...,x_n]_{d-s})$. 

Assume that:
\begin{itemize}
\item[-] $(n,d,h,s)\in \{(1,2h-1,h,h-2), (2,5,7,2), (3,3,5,1)\}$,
\item[-] $H_{\partial,s}$ has dimension $\binom{n+s}{n}-1$
\item[-] $H_{\partial,s}\cap V^n_{d-s}$ is empty.
\end{itemize}
Then $F$ is $h$-identifiable.
\end{thm}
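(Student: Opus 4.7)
The plan is to argue by contradiction. Suppose $F$ admits two distinct $h$-decompositions $F=\sum_{i=1}^h\lambda_iL_i^d=\sum_{j=1}^h\mu_jM_j^d$. Set $\Pi_L:=\langle L_1^{d-s},\dots,L_h^{d-s}\rangle$ and $\Pi_M:=\langle M_1^{d-s},\dots,M_h^{d-s}\rangle$ in $\mathbb{P}(\mathbb{C}[x_0,\dots,x_n]_{d-s})$; by Remark \ref{pd} both contain $H_{\partial,s}$. In each of the three numerical cases, $h$ is small enough compared to $N(n,d-s)$ that any $h$ points on the Veronese $V^n_{d-s}$ are in linearly general position, so $\dim\Pi_L=\dim\Pi_M=h-1$, and the dimension hypothesis on $H_{\partial,s}$ makes it a hyperplane inside each of $\Pi_L$ and $\Pi_M$.

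I would then split according to whether $\Pi_L=\Pi_M$. In the equal case, linear general position on $V^n_{d-s}$ caps $|\Pi_L\cap V^n_{d-s}|$ at $h$, so both sets $\{L_i^{d-s}\}$ and $\{M_j^{d-s}\}$ coincide with this intersection and the two decompositions agree. In the distinct case, $\Pi_L\cap\Pi_M=H_{\partial,s}$ for dimension reasons, hence
$$\dim(\Pi_L+\Pi_M)=2h-1-\binom{n+s}{n},$$
and the $2h-k$ distinct points of $\{L_i^{d-s}\}\cup\{M_j^{d-s}\}$ (where $k:=|\{L_i\}\cap\{M_j\}|$) lie in $V^n_{d-s}\cap(\Pi_L+\Pi_M)$. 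The hypothesis $H_{\partial,s}\cap V^n_{d-s}=\emptyset$ ensures that projection from $H_{\partial,s}$ is a morphism on $V^n_{d-s}$; a Bezout-and-fibre computation on this morphism should bound the length of $V^n_{d-s}\cap(\Pi_L+\Pi_M)$ by $h+1$, forcing $k\ge h-1$.

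Finally, in the borderline case $k=h-1$ one may assume $L_i=M_i$ for $i<h$ and $L_h\neq M_h$; subtracting the two expressions for $F$ produces the relation
$$\sum_{i=1}^{h-1}(\lambda_i-\mu_i)L_i^d+\lambda_hL_h^d-\mu_hM_h^d=0,$$
a non-trivial linear relation among $h+1$ distinct points of the higher Veronese $V^n_d\subset\mathbb{P}^{N(n,d)-1}$. In each of the three cases one checks $h+1\le N(n,d)$, so linear general position on $V^n_d$ forces these $h+1$ points to be linearly independent, giving the desired contradiction.

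The main obstacle is the length bound on $V^n_{d-s}\cap(\Pi_L+\Pi_M)$ in the $\Pi_L\neq\Pi_M$ branch. In case $1$ it is a one-line Bezout count: $\Pi_L+\Pi_M$ is a hyperplane in $\mathbb{P}^{h+1}$ cutting the rational normal curve of degree $h+1$, so the length is immediately at most $h+1$. In cases $2$ and $3$, however, $\Pi_L+\Pi_M$ is a $\mathbb{P}^7$ (resp.\ a $\mathbb{P}^5$) in $\mathbb{P}^9$, of codimension greater than one, so its generic intersection with $V^n_{d-s}$ is empty and a direct application of Bezout is insufficient; here the hypothesis $H_{\partial,s}\cap V^n_{d-s}=\emptyset$ becomes essential, allowing one to replace the ambient intersection by the fibres of the morphism $V^n_{d-s}\to\mathbb{P}^{N(n,d-s)-\binom{n+s}{n}-1}$ induced by projection from $H_{\partial,s}$ and to bound those fibres using the degree of the image.
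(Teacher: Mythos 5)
Your skeleton matches the paper's: argue by contradiction, observe that $\Pi_L$ and $\Pi_M$ each contain $H_{\partial,s}$ as a hyperplane (since $\binom{n+s}{n}-1=h-2$ in all three cases), form the span $H=\Pi_L+\Pi_M\cong\mathbb{P}^h$, and show it cannot accommodate the points coming from two distinct decompositions. But two of your steps have genuine gaps. First, you repeatedly invoke ``any $h$ points of $V^n_{d-s}$ are in linearly general position,'' which is false: for $(n,d,h,s)=(2,5,7,2)$, seven points of $v_3(\mathbb{P}^2)$ lying on the image of a line span only a $\mathbb{P}^3$. The facts you need ($\dim\Pi_L=h-1$, finiteness of $\Pi_L\cap V^n_{d-s}$, disjointness of the two sets of linear forms) must instead be extracted from the hypotheses: because $H_{\partial,s}$ is a hyperplane of $\Pi_L$ avoiding the Veronese, any curve inside $\Pi_L\cap V^n_{d-s}$ would have to meet it, and $\Pi_L=H_{\partial,s}$ is impossible since $\Pi_L$ contains Veronese points. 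The same objection applies to your last step, where you assume $h+1$ points of $V^n_d$ are independent; the paper sidesteps this branch entirely by deducing $L_i\neq M_j$ for all $i,j$ (your $k=0$) from $\Pi_L\cap\Pi_M=H_{\partial,s}$ and $H_{\partial,s}\cap V^n_{d-s}=\emptyset$.

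Second, and more seriously, the step you flag as the main obstacle is where the real work lies, and your projection-from-$H_{\partial,s}$ sketch only treats the case where $H\cap V^n_{d-s}$ is finite. Since $H_{\partial,s}$ has codimension $2$ in $H$, the hypothesis $H_{\partial,s}\cap V^n_{d-s}=\emptyset$ does not prevent $H\cap V^n_{d-s}$ from containing a curve $\Gamma$ (equivalently, the line $\pi(H)$ may lie inside the image of $V^n_{d-s}$ under your projection), and then there is no length bound at all: all the points could sit on $\Gamma$. This is exactly the case the paper's proof is built to exclude: for $(2,5,7,2)$ it notes that $\Gamma$, of degree $3\gamma$, meets each of the hyperplanes $\Pi_L,\Pi_M$ of $H$ in at most $3\gamma$ points, so at least $14-6\gamma$ of the $14$ points are base points of the moving part $\Sigma$ of the pencil of hyperplane sections through $H$, while a Bezout count on $\mathbb{P}^2$ bounds that base locus by $(3-\gamma)^2$; no value of $\gamma$ is consistent. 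Your proposal has no mechanism for this positive-dimensional case. Even in the finite case, the bound $h+1$ you assert is not what the degree computation yields (one gets $\deg V^n_{d-s}$, e.g.\ $9$ rather than $8$ for plane quintics --- still below $14$, but the stated bound is unjustified, and the subsequent conclusion $k\geq h-1$ inherits the problem). Both the curve analysis and the corrected justifications are needed before the argument closes.
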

\begin{proof}
Let us consider the case $(n,d,h,s)=(2,5,7,2)$. Assume that $F$ admits two different decompositions $\{L_{1},...,L_{7}\}$ and $\{l_{1},...,l_{7}\}$. Consider the second partial derivatives of $F$ and their span $H_{\partial}\subseteq \mathbb{P}^{9}$.  By Remark \ref{pd} a decomposition of $F$ induces a decomposition of its partial derivatives, hence we have 
$$H_{L} :=\langle L_{1}^{3},...,L_{7}^{3}\rangle\supset H_\partial\subset \langle l_{1}^{3},...,l_{7}^{3}\rangle=:H_l.$$ 
By hypothesis $\dim H_\partial=5$ and $H_\partial\cap V^3_2=\emptyset$, these yield:
\begin{itemize}
\item[i)] $H_\partial=H_L\cap H_l$,
\item[ii)] $L_i\neq l_j$ for any $i,j\in\{1,\ldots,7\}$,
\item[iii)] $H_L\cap V_3^2$ and $H_l\cap V_3^2$ are zero dimensional and $\sharp(H_L\cap V_3^2)=\sharp(H_l\cap V_3^2)=7$.
\end{itemize}

Let $H:=\langle H_L,H_l\rangle$ then $H$ intersects $V_{3}^{2}$ in at least 14 points and therefore 
$H\cap V_3^2$ contains a curve $\Gamma$ of degree  $3\gamma\leq 6$. 
Let $\Lambda$ be the pencil of hyperplanes containing $H$. Then $\Lambda_{|V^3_2}=\Gamma+\Sigma$, with $\Sigma$ a pencil of curves. Let $s$ be the degree of the base locus of $\Sigma$. The hypothesis $H_\partial\cap V^3_2=\emptyset$ and iii) yields 
$$s+6\gamma=14.$$
 On the other hand we only have the following possibilities:
\begin{itemize}
\item[-] $\gamma=1$ and $s=4$,
\item[-] $\gamma=2$ and $s=1$.
\end{itemize}
This contradiction proves the statement.

For 4-uples $(n,d,h,s)=(1,2h-1,h,h-2), (3,3,5,1)$ we may argue similarly to derive $h$-identifiability criteria, we leave the details to the reader.
\end{proof}

For some special values our methods yield a complete set of identifiability criteria.

\begin{Corollary}Let $V(n,d):=k[x_0,\ldots,x_n]_d$ be the vector space of homogeneous polynomial of degree $d$, with $k={\mathbb C},{\mathbb R}$.
Assume that the pair $(n,d)$ is in the following list
$$(1,d), (2,3), (2,4), (2,5), (2,6), (3,3), (3,4).$$ 
Then there is an effective criteria for specific $s$-identifiability for $V(n,d)$ for every $s$ where generic $s$-identifiability holds.
\end{Corollary}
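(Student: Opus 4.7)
The plan is to argue case-by-case for each pair $(n,d)$ in the list, matching each generically identifiable rank $s$ to one of the criteria already proved in this section.

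First, for each $(n,d)$ I would enumerate the values of $s$ for which generic $s$-identifiability is known. These come from two sources: (a) the subgeneric identifiability results that cover $s<h(n,d)$ outside of a short list of defective exceptions, and (b) the three exceptional generic-rank cases $(1,2m+1,m)$, $(2,5,7)$, $(3,3,5)$ classified in \cite{GM16}. For the non-generic pairs in our list, e.g.\ $(2,3)$, $(2,4)$, $(2,6)$, $(3,4)$, only subgeneric ranks contribute, while $(1,d)$, $(2,5)$, $(3,3)$ additionally supply one generic-rank entry each.

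Second, for each subgeneric triple $(s,n,d)$ arising in step one I would pick the catalecticant order $\sigma$ with $\binom{n+\sigma}{n}\geq s>\binom{n+\sigma-1}{n}$ and verify the hypotheses of Proposition \ref{pro2gen}. The effectiveness inequality $\binom{n+d-\sigma}{n}>s+n$ holds across the bulk of the subgeneric range. When it fails by a single unit, i.e.\ $\binom{n+d-\sigma}{n}=s+n$, I would invoke the sharper Proposition \ref{pro1.1gen}, which only needs the extra numerical bound $(d-\sigma)^n\leq s+1$; for each of the seven pairs this reduces to a short arithmetic check.

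Third, the three exceptional generic-rank cases are handled directly by Theorem \ref{hi}. Taken together, steps two and three produce an effective criterion for every $s$ at which generic $s$-identifiability holds for $(n,d)$ in the list, which is exactly the content of the corollary.

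The main obstacle I expect is precisely the boundary values of $s$ that sit between the ranges covered by Propositions \ref{pro2gen} and \ref{pro1.1gen}: at these $s$ neither effectiveness inequality is slack and one must either vary the flattening (using an asymmetric $(A,B)$-flattening rather than the symmetric catalecticant, as allowed by Proposition \ref{prop1gen}) or import the finer geometric argument behind Theorem \ref{hi}. Confirming that at least one of these alternatives goes through for every relevant triple in the finite list is where the real work of the proof concentrates.
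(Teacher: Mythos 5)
Your proposal follows essentially the same route as the paper: Proposition~\ref{pro2gen} for the bulk of subgeneric ranks, Proposition~\ref{pro1.1gen} at the boundary where the strict inequality degenerates to equality, and Theorem~\ref{hi} for the three exceptional generic-rank cases. The "main obstacle" you anticipate does not materialize: the only boundary triples that actually occur are $s=7$ for $(3,4)$ and $s=8$ for $(2,6)$, and in both Proposition~\ref{pro1.1gen} applies directly (the remaining top ranks, e.g.\ $5$ for $(2,4)$, $8$ for $(3,4)$, $9$ for $(2,6)$, are excluded because generic identifiability fails there by \cite{COV15}), so no asymmetric flattenings or extra geometric arguments are needed. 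The one genuine omission is the case $k=\mathbb{R}$: the statement covers both fields, and your argument only treats $\mathbb{C}$. The paper closes this by a one-line reduction --- given a real rank-$1$ decomposition, extend scalars to $\mathbb{C}$, apply the complex criterion, and deduce that uniqueness over $\mathbb{C}$ implies uniqueness over $\mathbb{R}$; you should add this step.
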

\begin{proof} Let $k=\mathbb{C}$ be the complex field. For pairs $(1,d)$, $d$ odd, $(2,5), (3,3)$ we apply the identifiability conditions expressed in Theorem~\ref{hi} for the generic rank and Proposition~\ref{pro2gen} for subgeneric ranks. For $(2,4)$ Proposition~\ref{pro2gen} applies to ranks less then or equal to  4, and for rank 5 there is not generic identifiability due to defectivity. For $(3,4)$ Proposition~\ref{pro2gen} applies to ranks less than or equal to 6 and  Proposition~\ref{pro1.1gen} applies to rank 7, while rank 8 is not generically identifiable, \cite{COV15}.  For $(2,6)$ we apply  Proposition~\ref{pro2gen} for $s\leq 7$ and Proposition~\ref{pro1.1gen} for $s=8$, while rank 9  is not generically identifiable, due to weak defectivity \cite{COV15}.
  
To conclude we only need to extend the results  to the real field. For this let $F=\sum_1^{k_i} \lambda_iL_i^d$ be a real polynomial rank-1 decomposition. Then via a field extension we consider it over ${\mathbb C}$ and apply the criterion to prove complex and hence real identifiability.
\end{proof}

\subsection{Macaulay2 implementation}\label{mac2}
Finally, we implement our identifiability algorithms in Macaulay2 \cite{Mc2}. The package is in the ancillary file \texttt{Identifiability.m2}. After loading this package in Macaulay2, the main method available is \texttt{certifyIdentifiability}. 

The easiest ways to use this method are either by giving in input a mixed symmetric tensor $T$, represented by a multihomogeneous polynomial, and a positive integer $h$, or by inputting one of its decompositions $T=T_1+\dots+T_h$ into $h$ rank one mixed symmetric tensors. Then the method returns the boolean value \texttt{true} if the constraints of the correspondent $h$-identifiability criterion are satisfied for $T$. For more details we refer to the documentation (\texttt{viewHelp certifyIdentifiability}). In what follows we show how it works in some cases. 
{\normalsize 
\begin{verbatim}
Macaulay2, version 1.9.2
with packages: ConwayPolynomials, Elimination, IntegralClosure, LLLBases, 
               PrimaryDecomposition, ReesAlgebra, TangentCone

i1 : loadPackage "Identifiability";
--** Identifiability (v0.3) loaded **--
-- Example 1 -- Random degree 5 polynomial in 3 variables
i2 : P2 = QQ[x,y,z];
i3 : T = for i in 1..7 list (random(1,P2))^5;
i4 : time certifyIdentifiability(sum T,7)
-- got symmetric tensor of dimension 3 and degree 5
-- applying Theorem 3.7 (7-identifiability for 3-forms of degree 5)...
-- 7-identifiability certified
     -- used 0.257789 seconds
o4 = true
i5 : time certifyIdentifiability matrix{T}
-- got symmetric tensor of dimension 3 and degree 5
-- applying Theorem 3.7 (7-identifiability for 3-forms of degree 5)...
-- 7-identifiability certified
     -- used 0.228473 seconds
o5 = true
i6 : -- first 6 summands of T
     T' = T_{0..5};
i7 : time certifyIdentifiability(sum T',6)
-- got symmetric tensor of dimension 3 and degree 5
-- specific 6-identifiability certified
     -- used 0.0363902 seconds
o7 = true
i8 : time certifyIdentifiability matrix{T'}
-- got symmetric tensor of dimension 3 and degree 5
-- 6-identifiability certified
     -- used 0.0511795 seconds
o8 = true
-- Example 2 -- the command below creates a random mixed symmetric 
-- tensor of dimensions {2,5,4}, multidegree {3,2,3}, rank<=5 
i9 : T = multirandom({2,5,4},{3,2,3},5);
i10 : -- number terms of the tensor T
      # terms T
o10 = 1200
i11 : time certifyIdentifiability(T,5)
-- got mixed symmetric tensor of dimensions {2, 5, 4}
   and multidegree {3, 2, 3}
-- specific 5-identifiability certified
     -- used 4.54164 seconds
o11 = true
-- Example 3 -- Random 1 x 7 matrix of degree 4 polynomials in 4 variables
i12 : decomposition = multirandom'({4},{4},7);
i13 : time certifyIdentifiability decomposition
-- got symmetric tensor of dimension 4 and degree 4
-- applying Proposition 3.3...
-- 7-identifiability certified
     -- used 1.03492 seconds
o13 = true
-- Example 4 -- Random 1 x 8 matrix of degree 6 polynomials in 3 variables
i14 : decomposition = multirandom'({3},{6},8);
i15 : time certifyIdentifiability decomposition
-- got symmetric tensor of dimension 3 and degree 6
-- applying Proposition 3.3...
-- 8-identifiability certified
     -- used 0.440192 seconds
o15 = true
-- Example 5 -- Random degree 3 polynomial in 4 variables of rank<=5
i16 : F = multirandom({4},{3},5);
i17 : time certifyIdentifiability(F,5)
-- got symmetric tensor of dimension 4 and degree 3
-- applying Theorem 3.7 (5-identifiability for 4-forms of degree 3)...
-- 5-identifiability certified
     -- used 0.098442 seconds
o18 = true
-- Example 6 -- Random degree 69 polynomial in 2 variables
i19 : P1 = QQ[x,y];
i20 : F = random(69,P1);
i21 : time certifyIdentifiability(F,35)
-- got symmetric tensor of dimension 2 and degree 69
-- applying Theorem 3.7 (35-identifiability for 2-forms of degree 69)...
-- 35-identifiability certified
     -- used 469.406 seconds
o21 = true
\end{verbatim}
}\noindent 

\bibliographystyle{amsalpha}
\bibliography{Biblio}

\end{document}